\documentclass[10pt,amsfonts, epsfig]{amsart}
\usepackage{amsmath, amscd, amssymb}
\usepackage{graphpap, color}
\usepackage[mathscr]{eucal}
\usepackage{mathrsfs}
\usepackage{pstricks}
\usepackage{cancel}
\usepackage[mathscr]{eucal}
\usepackage{verbatim}
\usepackage[all]{xy}
\usepackage{stmaryrd}

\def\cA{{\cal A}}


%


\numberwithin{equation}{section}

\newcommand{\CC}{\mathbb{C}}

\newcommand{\PP}{\mathbb{P}}



\newcommand{\cal}{\mathcal}

\def\cM{{\cal M}}

\def\cZ{{\cal Z}}







\def\mapright#1{\,\smash{\mathop{\lra}\limits^{#1}}\,}





\def\sta{^\ast}

\def\sta{^{\ast}}

\def\sta{^*}


\def\lra{\longrightarrow}



\def\begeq{\begin{equation}}
\def\endeq{\end{equation}}
\def\and{\quad{\rm and}\quad}

\def\sub{\subset}

\def\and{\quad\text{and}\quad}


\DeclareMathOperator{\End}{End}

 \DeclareMathOperator{\rank}{rank}

\DeclareMathOperator{\Res}{Res}


\newtheorem{prop}{Proposition}[section]

\newtheorem{theo}[prop]{Theorem}

\newtheorem{coro}[prop]{Corollary}

\newtheorem{exam}[prop]{Example}

\newtheorem{defi-prop}[prop]{Definition-Proposition}

\newtheorem{quest}[prop]{Question}

\def\dbar{\overline{\partial}}

\def\sta{^\ast}

\def\beq{\begin{equation}}
\def\eeq{\end{equation}}

\def\bee{\begin{equation}}
\def\eeq{\end{equation}}







\title{Virtual residue and generalized Cayley- Bacharach Theorem}

\author[Mu-Lin Li]{Mu-Lin Li}
\address{College of Mathematics and Econometrics, Hunan University, China} \email{mulin@hnu.edu.cn}
\date{}

\begin{document}
\maketitle

\begin{abstract}
 Using virtual residue, which is a generalization of Grothendieck residue, we generalized Cayley- Bacharach Theorem to the cases with positive dimensions.
\end{abstract}
\section{introduction}

Let $C_1,C_2\subset\PP^2$ be plane curves of degrees $d$ and $e$ respectively,
meeting in a collection of $d\dot e$ distinct points $\Gamma:=\{p_1,\cdots , p_{de}\}$. Cayley- Bacharach Theorem said that if $C \subset \PP^2$ is any plane curve of degree $d+e-3$ containing all but one point of $\Gamma$, then $C$ contains all of $\Gamma$, see \cite[Theorem CB4]{EGH}. The extension of Cayley- Bacharach property on projective manifolds had been proved to be related to Fujita conjecture, and the construction of special bundles, see \cite{HS}, \cite{SLT1}, \cite{TV} and \cite{SLT2}.

By using Grothendieck residue, Griffiths and Harris \cite[Chapter5]{GH} proved the following generalized Cayley- Bacharach theorem.
Let $\cM$ be a compact complex manifold and  let $E$ be a holomorphic bundle over $\cM$ with $\rank E=\dim \cM=n$. Let $\tilde{s}$ be a  holomorphic section of  $E$ who zero loci $\cZ$ are isolated points.
If $\cZ$ consists of distinct simple points, then each $D\in |K\otimes \det E|$ that passes through all but one point of $\cZ$ necessarily contains that remaining points, where $K_{\cM}$ is the canonical line bundle of $\cM$.

In this paper we will deal with the cases where some connected components of $\cZ$ may have positive dimensions.

Let $M$ be a complex manifold and  let ${V}$ be a holomorphic bundle over $M$ with $\rank V=\dim M=n$. Let $s$ be a  holomorphic section of  $V$ with compact zero loci $Z$. Given any holomorphic section   $$\psi\in \Gamma(M,K_M\otimes \det{V}),$$
using the Koszul complex   of $(V,s)$, the authors \cite{ML1} constructed a closed form $\eta_\psi\in \Omega^{n,n-1}(M\setminus Z)$ via Griffiths-Harris's construction \cite[Chapter 5]{GH}. Then they define the virtual residue as
\beq\label{V-residue}Res_{Z}\frac{\psi}{s}:=(\frac{1}{2\pi i})^n\int_{N}\eta_\psi\in\CC\eeq
where   $N$ is a real $2n-1$ dimensional piecewise smooth compact subset of $M$ that ``surrounds $Z$", in the sense that  $N=\partial T$ for some compact domain $T\sub M$, which contains $Z$ and is homotopically equivalent to $Z$. When $\dim Z>0$, it is a generalization of Grothendieck residue. It vanishes whenever $M$ is compact by Stokes theorem.

  Denote $\cA^{i,j}(\wedge^k V\otimes\wedge^l V^*)$ to be the sheaf of smooth $(i,j)$ forms
 on $M$ valued in $\wedge^k V\otimes \wedge^lV^*$. The Hermitian metrics of $M$ and  $V$    induce a metric on the bundle which corresponds to the sheaf $\oplus_{i,j,k,l}\cA^{i,j}(\wedge^k V\otimes \wedge^lV^*)$. \black Denote this metric by $\langle\cdot,\cdot\rangle(z)$ for $z\in M$ and set $|\alpha|(z)=\sqrt{\langle\alpha,\alpha\rangle(z)}$.

 Denote $\Omega^{(i,j)}(\wedge^k V \otimes\wedge^l V^*):=\Gamma(M,\cA^{i,j}(\wedge^k V \otimes\wedge^l V^*))$ and assign its element $\alpha$ to have degree $\sharp \alpha=i+j+k-\ell$.

 Given   $u\in \Omega^{(i,j)}(\wedge^k V)$ and
 $k\geq \ell$,  we   define
 \beq\label{operator1} u \lrcorner: \Omega^{(p,q)}(\wedge^{l}V^*)\lra \Omega^{(p+i,q+j)}(\wedge^{k-l}V) \eeq
where for $\theta\in \Omega^{(p,q)}(\wedge^{\ell}V^*)$, the $u\lrcorner \theta$ is determined by
$$( u\lrcorner \theta,\nu\sta) =(-1)^{(i+j)l+(p+q)\sharp u+\frac{l(l-1)}{2}}( u,\theta\wedge \nu^*) ,\qquad \forall \nu^*\in A^0(\wedge^{k-l}V^*).$$
where $(,)$
is the dual pairing between $\wedge^k V,\wedge^k V\sta$.

Applying the integral representation for the virtual residue $\Res \frac{\psi}{s}$ (\cite[Theorem 1.1]{ML1}) to the case where $M$ is compact, we have

\begin{theo} \label{th1}Let $M$ be a compact complex manifold.
   Pick a Hermitian metric $h$ on $V$ and let $\nabla$ be its associated Hermitian connection with $\nabla^{0,1}=\dbar$. Let $\xi=-\langle*,s\rangle$ be a smooth section of $V^*$ and $$S=-|s|^2+\dbar\xi\in\oplus_{p=0,1} \Omega^{(0,p)}(\wedge^p V^*).$$
   One has
 \beq\label{diff}\Res_Z\frac{\psi}{s}=\frac{(-1)^n}{(2\pi i)^n}\int_M(\psi\lrcorner e^{S})=0.\eeq
 Here $\lrcorner$ is the operation  contracting $\det V$ with $\det V^*$ so that $\psi\lrcorner e^{S}\in\Omega^{\ast,\ast}$.
 \end{theo}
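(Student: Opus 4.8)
The first equality is the integral representation of \cite[Theorem 1.1]{ML1} applied to the compact manifold $M$: there the virtual residue $\Res_Z\frac{\psi}{s}$, defined through $\eta_\psi$ and the cycle $N=\partial T$, is rewritten as the bulk integral $\frac{(-1)^n}{(2\pi i)^n}\int_M(\psi\lrcorner e^S)$ of the globally smooth top form $\psi\lrcorner e^S$. My plan is to take this identity as given and reduce the theorem to the vanishing $\int_M(\psi\lrcorner e^S)=0$, which I would prove by exhibiting the integrand as an exact form on all of $M$ and then invoking Stokes' theorem for the closed manifold $M$.

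The algebraic heart is to recognize $S$ as a coboundary for a twisted Dolbeault differential. I would set $D=\dbar+\iota_s$ acting on $\bigoplus_{p,q}\Omega^{(0,q)}(\wedge^p V^*)$, where $\iota_s$ is contraction by the section $s$. Since $\xi=-\langle\,*\,,s\rangle$ satisfies $\iota_s\xi=-|s|^2$, we have $S=-|s|^2+\dbar\xi=D\xi$. Because $s$ is holomorphic, $\dbar s=0$, so $\{\dbar,\iota_s\}=\iota_{\dbar s}=0$ and hence $D^2=0$; consequently $DS=D^2\xi=0$, and a direct computation gives $D(\xi S^m)=S^{m+1}$, so that $e^S-1=D\beta$ with $\beta:=\xi\sum_{m\ge0}\frac{S^m}{(m+1)!}$ a globally smooth form. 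Now $\psi\lrcorner$ annihilates the unit $1\in\wedge^0V^*$ and only sees the $\wedge^nV^*$-component; since $\iota_s$ lowers the $V^*$-degree and $\wedge^{n+1}V^*=0$, the $\det V^*$-part of $D\beta$ equals $\dbar$ of the $(0,n-1)$-form $\beta_{n,n-1}$ valued in $\wedge^nV^*$. Therefore $\psi\lrcorner e^S=\psi\lrcorner\dbar\beta_{n,n-1}$. Finally $\psi$ is $\dbar$-closed, so the contraction intertwines $\dbar$ with $d$ up to sign: because $\psi\lrcorner\beta_{n,n-1}$ has type $(n,n-1)$ its $\partial$-part vanishes and $\psi\lrcorner e^S=\pm\, d(\psi\lrcorner\beta_{n,n-1})$. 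As $\psi\lrcorner\beta_{n,n-1}$ is a global smooth form on the closed manifold $M$, Stokes' theorem yields $\int_M(\psi\lrcorner e^S)=0$.

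The step I expect to be the main obstacle is the bookkeeping of the signs dictated by the contraction convention \eqref{operator1}: one must check that $\{\dbar,\iota_s\}=0$ and the Leibniz rule $\dbar(\psi\lrcorner\gamma)=\pm\,\psi\lrcorner\dbar\gamma$ hold with compatible signs, so that $e^S-1$ is genuinely $D$-exact with a globally smooth primitive and the final expression is exactly $\pm\,d$ of a smooth form. The Hermitian metric and the Chern connection enter only through $\xi$ and play no cohomological role; since the integrand is exact, the normalizing constant $(-1)^n/(2\pi i)^n$ is irrelevant to the vanishing itself. Alternatively, one may bypass the integrand and argue directly at the level of the definition: $\eta_\psi$ is $d$-closed on $M\setminus Z$, and for compact $M$ the closure of $M\setminus T$ is a compact chain contained in $M\setminus Z$ with boundary $-N$, so $\int_N\eta_\psi=-\int_{M\setminus T}d\eta_\psi=0$, giving $\Res_Z\frac{\psi}{s}=0$ as already anticipated in the introduction.
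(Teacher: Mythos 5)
Your proposal is correct, and its main argument is genuinely different from the paper's. The paper offers no written proof of this theorem: the first equality is simply quoted from \cite[Theorem 1.1]{ML1}, and the vanishing is the one-line remark made right after the definition of the virtual residue --- $\eta_\psi$ is closed on $M\setminus Z$, so for compact $M$ one applies Stokes on the compact chain $M\setminus T$ with boundary $-N$ to get $\int_N\eta_\psi=0$. That is exactly your closing ``alternative'' argument, so you have recovered the paper's route in full. Your primary argument, by contrast, proves the vanishing intrinsically on the right-hand side: writing $D=\dbar+\iota_s$, noting $\iota_s\xi=-|s|^2$ so that $S=D\xi$, $D^2=\{\dbar,\iota_s\}=\iota_{\dbar s}=0$ by holomorphy of $s$, and transgressing $e^S-1=D\bigl(\xi\sum_{m\ge0}S^m/(m+1)!\bigr)$; since $\lrcorner$ only sees the $\wedge^nV^*$-component and $\wedge^{n+1}V^*=0$, the contraction $\psi\lrcorner e^S$ is $\dbar$ (hence, by type $(n,n-1)$, $d$) of a globally smooth form, and Stokes on closed $M$ finishes. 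This is essentially the Koszul/Mathai--Quillen transgression that underlies the integral representation in \cite{ML1}, so your route buys a self-contained proof of the vanishing that never references $\eta_\psi$, $N$, or $T$, at the cost of the sign bookkeeping you correctly flag; the paper's route is shorter but leans entirely on the residue definition and the citation. Your argument is also robust to the paper's own internal sign inconsistency between $e^{S}$ in \eqref{diff} and $e^{-S}$ in \eqref{formula1}, since $-S=D(-\xi)$ is equally exact, and, as you note, the normalizing constant $(-1)^n/(2\pi i)^n$ plays no role in the vanishing.
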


 Assuming that all the connected components $Z_i\subset Z=s^{-1}(0)$ are smooth, and $V$ is splitting over $Z_i$, $V|_{Z_i}=V_{i}\oplus N_i$, where $N_i= N_{Z_i/M}$.  Let $j: Z_i\rightarrow M$ be the embedding. Then we evaluate the integral in (\ref{diff}) as following

\begin{theo}\label{thm2}
\beq\label{formula1}
\frac{(-1)^n}{(2\pi i)^n}\int_M (\psi\lrcorner e^{-S})= \sum\frac{(-1)^n}{(2\pi i)^n}\int_{Z_i}  (\frac{\psi}{\det \ d s}\lrcorner\frac{1}{\det_{N_i}((1+R^{V_i}_s)/-2\pi \sqrt{-1})})=0,
\eeq
where $R^{V_i}_s:=-(d s)^{-1}P^{Im \ d s}R(.,j_{*})P^{V_i}\in \overline{T^*Z_i}\otimes V_i^*\otimes \End N_i$.
\end{theo}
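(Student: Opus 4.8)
The plan is to deduce the final equality to zero directly from Theorem~\ref{th1}, so that the substance of the statement is the localization identity rewriting $\int_M\psi\lrcorner e^{-S}$ as the sum of the integrals over the components $Z_i$. I would prove this identity by concentrating the integrand onto $Z=s^{-1}(0)$ and then evaluating each local contribution by a fibrewise Gaussian integration in the normal directions. The first step is to use the invariance of the left-hand integral under rescaling the section: replacing $s$ by $ts$ $(t>0)$ sends $\xi\mapsto t\xi$ and $|s|^2\mapsto t^2|s|^2$, hence furnishes the Gaussian weight $e^{-t^2|s|^2}$, while by the closedness and exactness that underlie Theorem~\ref{th1} (compare \cite{ML1}) the value of the integral does not depend on $t$. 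Because only the top term survives the contraction with $\psi\in\Gamma(M,K_M\otimes\det V)$, the integrand equals, up to sign, $e^{-t^2|s|^2}\,\tfrac{t^n}{n!}\,\psi\lrcorner(\dbar\xi)^n$; as $t\to\infty$ the weight forces it to concentrate on $Z$, so that $\int_M$ collapses to $\sum_i\int_{U_i}$ over disjoint tubular neighbourhoods $U_i$ of the $Z_i$, each identified with a neighbourhood of the zero section of $N_i=N_{Z_i/M}$.

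On each $U_i$ I would choose a unitary frame of $V$ adapted to $V|_{Z_i}=V_i\oplus N_i$ together with holomorphic coordinates $(z,w)$ in which $Z_i=\{w=0\}$, with $z$ tangential and $w$ normal. Writing the holomorphic Jacobian $\partial s$ in block form relative to the tangential/normal coordinates and the $V_i/N_i$ splitting, the normal block is the invertible map $ds_N$ given by $P^{\operatorname{Im} ds}\,ds$ on normal vectors, responsible for $\det ds$; the two off-diagonal blocks are $O(w)$, and the remaining $k\times k$ excess block $(k=\dim Z_i)$ is $A(w)\approx P^{V_i}(\nabla ds)(w)$, linear in $w$ and vanishing along $Z_i$. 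A Schur-complement computation then gives $\det\partial s\approx\det(ds_N)\cdot\det_{k}A(w)$, so the coefficient of $\psi\lrcorner(\dbar\xi)^n$ is proportional to $\overline{\det\partial s}=\overline{\det ds_N}\cdot\overline{\det_{k}A(w)}$, a polynomial of degree $k$ in $\bar w$.

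The heart of the argument is the fibre integral over the normal $\CC^c$, carried out after the substitution $u=t\,ds_N\,w$, which normalizes the weight to $e^{-|u|^2}$. The naive leading term carries $k$ antiholomorphic excess legs $\overline{(ds_N)^{-1}(\nabla ds)\,u}$ arising from $\overline{\det_{k}A}$; these are unmatched by holomorphic legs and integrate to zero because of the Gaussian moment identities $\int_{\CC^c}e^{-|u|^2}du^{\wedge c}\wedge d\bar u^{\wedge c}\propto(-2\pi\sqrt{-1})^{c}$ and $\int_{\CC^c}e^{-|u|^2}\bar u^{\beta}\,dV=0$ for $\beta\neq0$. A nonzero contribution appears only when each such antiholomorphic leg is paired, via $\int e^{-|u|^2}u_a\bar u_b\,dV\propto(-2\pi\sqrt{-1})\,\delta_{ab}$, with a holomorphic leg produced by the off-diagonal $V_i$--$N_i$ block of the curvature $R$ of $\nabla$. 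Summing these Wick matchings to all orders reorganizes the contributions into the factor $\det_{N_i}\!\bigl((1+R^{V_i}_s)/(-2\pi\sqrt{-1})\bigr)$ in the denominator, while the normal block yields $1/\det ds$ and the holomorphic part of $\psi$ restricts to $Z_i$; the tensor effecting precisely this matching is $R^{V_i}_s=-(ds)^{-1}P^{\operatorname{Im} ds}R(\cdot,j_*)P^{V_i}$. The residual $k$-fold integral over $Z_i$ is then the $i$-th summand, and the final vanishing follows at once from Theorem~\ref{th1}.

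The step I expect to be the main obstacle is exactly this all-orders Gaussian (Berezin) bookkeeping: one must show that the combinatorics of matching the $k$ excess antiholomorphic legs against the curvature-generated holomorphic legs, with the correct Wick contractions and $(-2\pi\sqrt{-1})$ normalizations, assembles precisely into $\det_{N_i}\!\bigl((1+R^{V_i}_s)/(-2\pi\sqrt{-1})\bigr)$ rather than into some other invariant polynomial in $R$. Two further delicate points are the uniform control of the subleading terms as $t\to\infty$, ensuring that only the matched contributions survive in the limit, and the careful propagation of signs through the contraction operator $\lrcorner$ defined in \eqref{operator1}, on which the correct placement of $\det ds$ and of the determinant over $N_i$ ultimately depends.
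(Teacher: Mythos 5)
You have the right architecture --- deformation invariance, Gaussian concentration on $Z$, fibrewise Wick evaluation assembling the curvature determinant, vanishing from Theorem~\ref{th1} --- and this is indeed the paper's strategy (following Feng--Ma \cite{Feng}). But your specific deformation family is broken. Replacing $s$ by $ts$ rescales $\dbar\xi$ by $t$ and $|s|^2$ by $t^2$, which is \emph{not} the family covered by the invariance result in \cite{ML1}: Proposition 4.14 there concerns $e^{(2t)^{-1}S}$, i.e.\ scaling the whole exponent (equivalently the Hermitian metric) homogeneously, and the paper lets $t\to 0$. With your family the top term is $t^{n}e^{-t^2|s|^2}\,\psi\lrcorner(\dbar\xi)^n/n!$, while the invariant family at the same Gaussian width carries the prefactor $t^{2n}$; so your integrals scale like $t^{-n}$, consistent with the homogeneity $\Res_Z\frac{\psi}{ts}=t^{-n}\Res_Z\frac{\psi}{s}$ of the residue. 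Concretely, for $n=1$ and $s=aw$ near a simple zero, your fibre integral is $t\bar a\int_{\CC}e^{-t^2|a|^2|w|^2}\,dw\wedge d\bar w=O(t^{-1})$: after your substitution $u=t\,ds_N w$ the $t^{n}$ prefactor compensates only the antiholomorphic legs $d\bar w\mapsto t^{-1}d\bar u$, not the holomorphic legs of $\psi$, leaving a net $t^{-n}$ in every local contribution. Hence each local term tends to $0$ as $t\to\infty$, and on compact $M$ (where your family is constant in $t$ only because every member vanishes) the argument degenerates to $0=0$ and never produces the middle expression of \eqref{formula1}. The fix is to use the homogeneous family $e^{(2t)^{-1}S}$ as in the paper's proof of the localization theorem.

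Your Wick bookkeeping in the key step is also misdirected. In the paper's computation, after rescaling the normal coordinates by $\sqrt t$, \emph{all} terms involving second covariant derivatives of $s$ --- precisely your excess block $A(w)\approx P^{V_i}(\nabla ds)(w)$ and its conjugate, as well as the purely holomorphic term $\nabla_{\partial/\partial z_j}\nabla_{v_z}\nabla_{v_z}s$ --- drop out, by the $t$-weight count and the odd-moment identity $\int_{\CC}\bar z^{i}e^{-|z|^2}dz\wedge d\bar z=0$. The tangential $\overline{T^*Z_i}\otimes V_i^*$ legs that survive arise instead from commuting covariant derivatives, $\nabla_Y\nabla_Y\nabla_{\partial/\partial z_j}s(0)=2R^{V}(\bar v_z,\partial/\partial z_j)\nabla_{v_z}s(0)+\cdots$: each surviving leg already contains a matched $z\bar z$ pair of normal variables and self-pairs under the Gaussian, so that exponentiating and integrating yields $1/\det_{N_i}\bigl((1+R^{V_i}_s)/(-2\pi\sqrt{-1})\bigr)$ directly as a perturbed Gaussian determinant, with no cross-pairing needed. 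Your proposed matching of the $\overline{\det_k A}$ legs against curvature-generated holomorphic legs would leave $\nabla ds$ in the final answer, whereas \eqref{formula1} contains only $ds$ and $R$; those cross terms in fact vanish. So the step you yourself flagged as the main obstacle is exactly where the outline fails, both in the choice of surviving terms and in the one-parameter family used to isolate them.
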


This can be considered as a mathematical interpretation of the residue formula used in \cite[(2.11)]{WB}.

Let  $Z_1$ be one of the zero dimension components,  then we have the following generalized Cayley- Bacharach Theorem.

\begin{coro}Under the assumptions of Theorem \ref{thm2}.
If $\psi$ is vanishing on all components of $Z$ except $Z_1$, then it is vanishing on $Z$.
\end{coro}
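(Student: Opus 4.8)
The plan is to extract the corollary directly from the localization formula (\ref{formula1}) of Theorem \ref{thm2}, which exhibits the (vanishing) total integral as a sum of purely local contributions, one for each connected component $Z_i$ of $Z$. The key structural feature to exploit is that each summand
\[
\frac{(-1)^n}{(2\pi i)^n}\int_{Z_i}\left(\frac{\psi}{\det ds}\lrcorner\frac{1}{\det_{N_i}((1+R^{V_i}_s)/-2\pi\sqrt{-1})}\right)
\]
is $\CC$-linear in $\psi$ and involves $\psi$ only through its pointwise values along $Z_i$, i.e. through the restriction $\psi|_{Z_i}$. Consequently, if $\psi|_{Z_i}=0$ the entire $i$-th term vanishes identically.

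First I would invoke the hypothesis that $\psi$ vanishes on every component of $Z$ except $Z_1$. By the observation above, all summands with $i\neq 1$ drop out, so the identity (\ref{formula1}) collapses to the single equation
\[
\frac{(-1)^n}{(2\pi i)^n}\int_{Z_1}\left(\frac{\psi}{\det ds}\lrcorner\frac{1}{\det_{N_1}((1+R^{V_1}_s)/-2\pi\sqrt{-1})}\right)=0 .
\]
Next I would evaluate this surviving term using that $Z_1$ is zero-dimensional, hence a single point $p$. Since $\dim Z_1=0$, the splitting $V|_{Z_1}=V_1\oplus N_1$ with $N_1=N_{Z_1/M}=T_pM$ forces $V_1=0$ and identifies $V_p\cong T_pM$ via the derivative $ds(p)$, which is an isomorphism at a simple zero, so $\det ds(p)\neq 0$. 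Moreover $R^{V_1}_s$, being valued in $\overline{T^*Z_1}\otimes V_1^*\otimes\End N_1$, vanishes because $T^*Z_1=0$; hence the determinant factor reduces to $\det_{N_1}(\Id/{-2\pi\sqrt{-1}})=(-2\pi\sqrt{-1})^{-n}$. Carrying out the elementary constant bookkeeping, $\tfrac{(-1)^n}{(2\pi i)^n}(-2\pi\sqrt{-1})^n=1$, and integrating over the point $p$ (which is just evaluation), the term becomes exactly the classical Grothendieck residue $\psi(p)/\det ds(p)\in\CC$.

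Combining the two displays yields $\psi(p)/\det ds(p)=0$, and since $\det ds(p)\neq 0$ this gives $\psi(p)=0$. Thus $\psi$ vanishes on $Z_1$ as well, hence on all of $Z$, which is the assertion. The only point requiring genuine care — the main (if modest) obstacle — is the identification of the $Z_1$-contribution with $\psi(p)/\det ds(p)$: one must verify that at a zero-dimensional component the curvature correction $R^{V_1}_s$ really drops out and that the contraction $\lrcorner$, which pairs the $\det V$-factor of $\psi$ against $\det V^*$ supplied by the determinant term, produces a \emph{nonzero} numerical multiple of $\psi(p)$ rather than $0$. This is precisely the $\dim Z=0$ specialization already underlying the Griffiths--Harris residue, so once it is recorded the corollary follows at once.
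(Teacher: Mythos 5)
Your proposal is correct and follows essentially the same route as the paper: the paper likewise applies the localization formula of Theorem \ref{thm2}, notes that the terms for components where $\psi$ vanishes drop out, identifies the surviving zero-dimensional contribution with the classical residue $h(p)/\det\bigl(\frac{\partial s}{\partial z}(p)\bigr)$ (via its Corollary \ref{classical prop}), and concludes $h(p)=0$. Your explicit verification that $V_1=0$, $R^{V_1}_s=0$, and the constants cancel at a point is exactly the specialization the paper delegates to that corollary, so the two arguments coincide in substance.
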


\begin{quest} Can we extend the Cayley- Bacharach Theorem to the cases where all the connect components of $Z$ are positive dimensions?
\end{quest}
\noindent{\bf Acknowledgment}:  The author thanks Hao Sun for informing him the relations between Cayley- Bacharach property and Fujita conjecture. This work was supported by Start-up Fund of Hunan University.
\section{Localization by section}

Let $Z=\cup Z_i$, and $Z_i$ be smooth connected component. Denote by $N_i=N_{Z_i/M}$ the normal bundle of $Z_i\subset M$.  Because $Z_i$ is smooth, the Kuranishi sequence gives us a exact sequence

$$
0\lra T_{Z_i} \lra T_M|_{Z_i}\mapright{d s} V|_{Z_i} \lra V_i\lra 0.
$$
Since $N_i\cong T_M|_{Z_i}/T_{Z_i}$, the above sequence gives us the following short exact sequce
$$
0\lra N_i\mapright{d s} V|_{Z_i} \lra V_i\lra 0.
$$
Assuming that the above exact sequence is splitting, therefore $V|_{Z_i}=V_i\oplus N_{i}\cong V_i\oplus Im\ ds$. Let $\psi\in\Gamma(M,K_M\otimes\det V)$, thus it can be viewed as morphism
\beq\label{equ1}\psi :\det V^{*}\rightarrow K_M.\eeq
 $d s $ induced the following isomorphism
 \beq\label{equ2}\det d s: \det N_i\rightarrow \det Im\  ds.\eeq
 (\ref{equ1}) and (\ref{equ2}) induced a morphism
 $$\frac{\psi}{\det \ d s}: \det V_i^{*}\rightarrow K_{Z_i}.$$
 The correspondent element in $\Gamma(Z,K_{Z_i}\otimes\det V_i)$ is also denoted by $\frac{\psi}{\det \ d s}$.

Let $h$ be a Hermitian metric on V such that $V_i$ and $N_i$ are orthogonal on $Z_i$ .  Let $g_i$ be a Hermitian metric on $N_i$ such that $ds: N_i \cong Im\ ds$ is an isometry. Let $R^{V}$ be the curvature of the holomorphic Hermitian connection $\nabla$ on $(V,h)$. Let $j: Z_i\rightarrow M$ be the embedding. Let $P^{V_i}$ and $P^{Im \ d s}$ be the natural projections from $V$ onto $V_i$ and $Im\ d s$ . Let

 $$R^{V_i}_s:=-(d s)^{-1}P^{Im \ d s}R^{V}(.,j_{*}.)P^{V_i}\in \overline{T^*Z_i}\otimes V_{i}^*\otimes \End N_i$$
$R^{V_i}_s$ is well defined since $P^{Im \ d s} R^{V}( j_*., j_{*}.)P^{V_i}$ = 0.
\begin{theo}\label{th4}
 Under the above assumptions we have the following formula,
\beq\label{formula}
\int_M (\psi\lrcorner e^{-S})= \sum\int_{Z_i}  (\frac{\psi}{\det \ d s}\lrcorner\frac{1}{\det_{N_i}((1+R^{V_i}_s)/-2\pi \sqrt{-1})})=0.
\eeq
\end{theo}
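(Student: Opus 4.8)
The plan is to prove the middle equality by a Mathai--Quillen-type localization, since the outer equality $\int_M(\psi\lrcorner e^{-S})=0$ is already furnished by Theorem \ref{th1} (Stokes on the compact $M$). The first observation is that, because $\lrcorner$ contracts $\det V$ against $\det V^\ast$, only the $\wedge^n V^\ast$-component of $e^{-S}$ survives the contraction, namely $\tfrac{1}{n!}(\dbar\xi)^n$ weighted by the scalar Gaussian factor in $|s|^2$; thus $\psi\lrcorner e^{-S}$ is a Gaussian-shaped top form that, after rescaling, concentrates near $Z$. I would first replace $s$ by $ts$ and record that the resulting integral $\int_M(\psi\lrcorner e^{-S_t})$ is independent of $t>0$. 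This follows from the closedness of the integrand built into the construction of $\eta_\psi$ in \cite{ML1}: the $t$-derivative of $\psi\lrcorner e^{-S_t}$ is exact, so it integrates to zero on the compact $M$.

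Next I would let $t\to\infty$. The factor carrying $|ts|^2$ decays exponentially away from $Z$, so the integral localizes: for any tubular neighborhood $U=\bigsqcup_i U_i$ of $Z=\bigsqcup_i Z_i$, the contribution of $M\setminus U$ tends to $0$, giving $\int_M=\sum_i\lim_{t\to\infty}\int_{U_i}$. On each $U_i$ I would use the diffeomorphism $U_i\cong$ (a neighborhood of the zero section of $N_i$) together with the splitting $V|_{Z_i}=V_i\oplus N_i$ and the isometry $ds:N_i\cong\image ds$. To leading order in the normal directions the section $s$ equals its linearization $ds$, so the fiber integral over $N_i$ becomes a genuine Gaussian.

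The heart of the argument is then the fiber (Berezin) integration along $N_i$. Separating $\dbar\xi$ into its purely-fiber part and its base part, the normal Gaussian integral of the fiber part produces the normalization $\det_{N_i}\big((1+R^{V_i}_s)/(-2\pi\sqrt{-1})\big)^{-1}$: the $\det_{N_i}$ is the $\rank N_i$-fold Gaussian determinant, the $-2\pi\sqrt{-1}$ is the one-dimensional Gaussian constant, and the curvature term $R^{V_i}_s=-(ds)^{-1}P^{\image ds}R^V(\cdot,j_\ast\cdot)P^{V_i}$ arises from the base variation of the metric splitting, i.e.\ the off-diagonal block of $R^V$ coupling $V_i$ and $\image ds$. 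What remains is exactly $\tfrac{\psi}{\det ds}$ contracted against this factor and integrated over $Z_i$, yielding the claimed sum.

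The main obstacle I expect is this last local computation, and in two respects. First, I must expand $\dbar\xi$ in normal coordinates and show cleanly that only the terms of total fiber-degree equal to $\rank N_i$ survive the Gaussian, with all higher-order (in the normal coordinate) corrections suppressed as $t\to\infty$; this is where the rescaling invariance from the first step is essential. Second, I must match the curvature correction precisely, including verifying the well-definedness assertion $P^{\image ds}R^V(j_\ast\cdot,j_\ast\cdot)P^{V_i}=0$ (which holds because, $s$ being holomorphic, the restriction of the $(1,1)$-curvature to the base directions does not couple the two blocks), and tracking the signs together with the $(2\pi\sqrt{-1})^{-n}$ and $(-1)^n$ normalizations through the Berezin integral---bookkeeping that is routine but error-prone. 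Once the local identity is in place, the final $=0$ is immediate from Theorem \ref{th1}.
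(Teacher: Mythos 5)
Your proposal takes essentially the same route as the paper's proof: the paper likewise invokes the deformation invariance of the rescaled integral $\int_M(\psi\lrcorner e^{(2t)^{-1}S})$ in $t$ (citing \cite[Proposition 4.14]{ML1}), lets the Gaussian concentrate on $Z$, and evaluates the fiberwise Gaussian in adapted holomorphic coordinates with a parallel-transported frame, where the Taylor expansion of $\dbar\xi$ produces exactly the off-diagonal curvature term giving $R^{V_i}_s$ and the determinant factor $\det_{N_i}\bigl((1+R^{V_i}_s)/-2\pi\sqrt{-1}\bigr)^{-1}$, with the final vanishing supplied by compactness of $M$ as in Theorem \ref{th1}. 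The only differences are cosmetic: your rescaling $s\mapsto ts$ with $t\to\infty$ corresponds to the paper's $(2t)^{-1}$-scaling with $t\to 0$, and your Berezin-integration packaging is the paper's explicit coordinate computation (modeled, as the paper notes, on \cite{Feng}).
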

\begin{proof}{By \cite[Proposition 4.14]{ML1},
 $\int_M (\psi\lrcorner e^{(2t)^{-1}S})$ is  independent of $t$ for $t>0$.
 Therefore we can do the calculation by letting $t\to 0$. This method is parallel to the one used in \cite{Feng}.
For arbitrary $y\in Z_i$, since $Z_i$ is a complex submanifold with dimension $m_i$, we can find out holomorphic coordinates $\{z_i\}$ of the neighborhood $U$ of $y$ such that $y$ corresponds to 0, and  $\{\frac{\partial}{\partial z_i}\}_{i=m_i+1}^n$ is an orthonormal basis of the normal bundle $N_y$. Moreover $U\cap Z_i=\{p\in U,z_{m_i+1}(p)=\cdots=z_n(p)=0\}$. Denote $z^{\prime}=(z_1,\cdots,z_{m_i}), z^{\prime\prime}=(z_{m_i+1},\cdots,z_n), z=(z^{\prime},z^{\prime\prime})$.

Let $\{\mu_k(z^{\prime},0)\}_{k=1}^n$ and $\{\mu_k(z^{\prime},0)\}_{k=m_i+1}^n$ be the holomorphic frame for $V$ and $Im\ d s$ on $U\cap Z_i$ with

\begin{eqnarray*}
\nabla_{\frac{\partial}{\partial z_k}} s |_y&=&\frac{\partial s}{\partial z_k}|_y= \mu_k(0)\ \ \ \ \ \   m_i+1\le k\le n.
\end{eqnarray*}
Let $\{\mu^k(z^{\prime},0)\}_{k=1}^n$  be the correspondent basis of $V^{*}$.
 Define $\mu_k(z)$ by parallel transport of $\mu_k(z^{\prime},0)$ with respect to $\nabla$ along the curve $u\rightarrow (z^{\prime},uz^{\prime\prime})$. Identify $V_z$ with $V_{(z^{\prime},0)}$ by identify $\mu_k(z)$ with $\mu_k(z^{\prime},0)$. Denote by $ W_y(\epsilon)$ the neighborhood of $y$ in the normal space $N_i$. Then

\begin{eqnarray}\label{equ3}
&&\int_{Z_i\cap U}\int_{W_y(\epsilon)}(\psi \lrcorner e^{-\frac{1}{2t}(\dbar (-\xi)+ |s|^2 )})\\
&=&\int_{Z_i\cap U}\int_{W_y(\epsilon)/\sqrt{t}}t^{n- m_i}(\psi(y,\sqrt{t}z)\lrcorner e^{-\frac{1}{2t}(\dbar (-\xi)(\sqrt{t}z)+ |s(\sqrt{t}z)|^2 )}).\nonumber
\end{eqnarray}


From now on we set $z=(0,z'')$, $v_z=\sum_{j=m_i}^n z_j(\frac{\partial}{\partial z_j})$ and $Y=v_z+\bar{v}_z$. The tautological vector field is $Y=v_z+\bar{v}_z$. Then
$$
\frac{1}{2t}|s(\sqrt{t}z)|^2 = \frac{1}{2}|\nabla_Y s|^2+\mbox{O}(t^{\frac{1}{2}}) =\frac{1}{2}|z|^2+O(\sqrt{t}),
$$
and

$$
\dbar \langle*,s\rangle=\sum_{k=1}^n\langle\mu_k,\nabla s\rangle\mu^k.
$$

 Since $\nabla_Y \mu_k(0)=0$, we have
\begin{eqnarray*}
\frac{1}{2t}\dbar \langle*,s\rangle(\sqrt{t}z)&=&\frac{1}{2t}\sum_{k=1}^n \langle\mu_k,\nabla s\rangle(\sqrt{t}z)\mu^k(0)\\
&=&\frac{1}{2t}\sum_{k=1}^n \bigg(\langle\mu_k,\nabla s\rangle(0)+\sqrt{t}\langle\mu_k,\nabla_Y \nabla s\rangle(0)\\
&& +\frac{t}{2}(\langle\nabla_Y\nabla_Y\mu_k,\nabla s\rangle+\langle\mu_k,\nabla_Y\nabla_Y\nabla s\rangle)(0)+O(t^{\frac{3}{2}})\bigg)\mu^k(0).
\end{eqnarray*}

Because there is a factor $t^{n-m_i}$ in (\ref{equ3})\black, it should be clear that in the limit, only those monomials in the vertical form
$$
d\overline{z_{m_i+1}}\wedge\cdots\wedge d\overline{z_n}\otimes \mu^{m_i+1}(0)\wedge\cdots\wedge\mu^{n}(0)
$$
whose weight is exactly $t^{m_i-n}$ should be kept.

So the second term contribute zero to the integral, and the terms contributes nonzero in the third term are

$$
\frac{1}{4}\sum_{k=1}^{n}\sum_{j=1}^{m_i} (\langle\nabla_Y\nabla_Y\mu_k,\nabla_{\frac{\partial}{\partial z_j}} s\rangle(0)+\langle\mu_k,\nabla_Y\nabla_Y\nabla_{\frac{\partial}{\partial z_j}}s\rangle(0))d \bar{z}_j\otimes\mu^k(0).
$$

But for $1\le j\le m_i$, both $\nabla_{\frac{\partial}{\partial z_j}}s(0)=0, \nabla_{\frac{\partial}{\partial z_j}}\nabla_{\bar{v}_z}\nabla_{v_z}s(0)=\nabla_{\frac{\partial}{\partial z_j}}(R^{V}(\bar{v}_z,v_z)s)(0)=0$. Thus
$$
\nabla_{Y}\nabla_{Y}\nabla_{\frac{\partial}{\partial z_j}}s(0)=2R^{V}(\bar{v}_z,\frac{\partial}{\partial z_j})\nabla_{v_z}s(0)+\nabla_{\frac{\partial}{\partial z_j}}\nabla_{v_z}\nabla_{v_z}s(0).
$$

Note that $\nabla=\nabla^{N_i}\oplus\nabla^{V_i}$ on $Z_i$, where $\nabla^{N_i}$ and $\nabla^{V_i}$ are induced connections. By previous discussion, as $t\to 0$, we should replace $\frac{1}{2t}(\dbar (-\xi)+ |s|^2 )$ by,
\begin{eqnarray*}
&&\sum_{k=m_i+1}^n \frac{1}{2t}\langle\mu_k,\nabla s\rangle(0)\mu^k(0)\\
&&+\frac{1}{2}\sum_{k=1}^{m_i}\sum_{j=1}^{m_i}\big<\mu_k,R^{V}(\bar{v}_z,\frac{\partial}{\partial z_j})\nabla_{v_z} s\big>(0)d\bar{z_j}\otimes \mu^k(0).
\end{eqnarray*}

For $\int_{\mathbb{C}}\bar{z}^ie^{-|z|^2}dz\wedge d\bar{z}=0$. So as $ t\to 0$, the integral becomes:
$$
\int_{Z_i}\int_{N_i} (\psi\lrcorner\mbox{exp}(-\frac{1}{2}\sum\langle\mu_k,\nabla s\rangle(0)\mu^k(0)-\frac{1}{2}\langle\cdot,P^{V_i}R^{V}(\bar{v}_z,j_{*}\cdot)\nabla_{v_z} s\rangle(0)-\frac{1}{2}|\nabla_ {v_z} s|^2)).
$$

The second integrand is equal to
\begin{eqnarray*}
&&\mbox{exp}(-\frac{1}{2}\sum_{k=m_i+1}^n d\bar{z}_{k}\wedge\mu^{k}(0)+\frac{1}{2}\langle R^{V}(v_z,j_{*}\cdot)P^{V_i},\nabla_{v_z} s\rangle(0)-\frac{1}{2}|z|^2)\\
&=&\mbox{exp}(\frac{1}{2}\langle R^{V}(v_z,j_{*}\cdot)P^{V_i},\nabla_{v_z} s\rangle-\frac{1}{2}|z|^2)(\frac{1}{2})^{n-m_i}d z_{m_i+1}\wedge \cdots dz_{n}\otimes \mu^{m_i+1}(0)\wedge\cdots\wedge \mu^n(0).
\end{eqnarray*}

So the whole integral is equal to
$$
\int_{Z_i} (\frac{\psi}{\det d s}\lrcorner\frac{1}{\det_{N_i}((1+R^{V_i}_s)/-2\pi \sqrt{-1})}),
$$
where $P^{V_i}$ is the projection from $V$ to $V_i$, $P^{Im \ d s}$ is the projection from $V$ to $Im \ d s$ and $R^{V_i}_s:=-(d s)^{-1}P^{Im \ d s}R^{V}(.,j_{*})P^{V_i}\in \overline{T^*Z_i}\otimes V_i^*\otimes \mbox{End}N_i$.
}
\end{proof}

\begin{exam}Let M be a compact complex manifold with $\dim M=n$, $L$ be a holomorphic bundle with rank $r<n$. Let $s\in\Gamma(M,L)$ be transversal. Then the zero locus $Z$ of $s$ is smooth with $\dim Z=n-r$. Let $V=L\oplus V_1$, where $V_1$ is a holomorphic bundle with rank $n-r$. $s$ can be considered as a section of $V$. It satisfies all the condition of the above theorem.
\end{exam}

Let $V$ be a holomorphic bundle over a compact complex manifold $M$, with rank $V=\dim M=n$. Let $\psi\in\Gamma(M,K_M\otimes\det V)$, and $s\in\Gamma(M,V)$ be a transversal section with smooth zero loci.  Then the zero locus of $s$ are finite points $\{p_i\}$. Assuming that around a neighborhood of $p_i$, $s=\sum s_k e_k$, and $\psi=h(z)dz_1\wedge\cdots\wedge dz_n\otimes e_1\wedge\cdots\wedge e_n$. Then we have the following equalities, which recovered the residue theorem in \cite[Page 731]{GH}.
\begin{coro}\label{classical prop}
$\Res_{Z}\frac{\psi}{s}=\sum_{p_i}\frac{h(p_i)}{\det(\frac{\partial s}{\partial z}(p_i))}=0$.
\end{coro}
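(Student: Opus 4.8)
The plan is to obtain this corollary as the degenerate case of Theorem~\ref{th4} (equivalently of Theorem~\ref{thm2}) in which every connected component of $Z$ is a single reduced point. First I would record the structural simplifications forced by transversality. Since $s$ is transversal with finite zero locus, each component is a point $Z_i=\{p_i\}$, so $m_i=\dim Z_i=0$; the Kuranishi sequence $0\to N_i\xrightarrow{ds}V|_{p_i}\to V_i\to 0$ then has $N_i=T_{p_i}M$ of rank $n$ and $V|_{p_i}$ of rank $n$, which forces $V_i=0$ and $\operatorname{Im} ds=V|_{p_i}$. In particular the curvature term $R^{V_i}_s\in\overline{T^*Z_i}\otimes V_i^*\otimes\End N_i$ vanishes, both because $\overline{T^*Z_i}=0$ for a point and because the factor $V_i^*$ is trivial.

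Next I would evaluate the two ingredients of the right-hand side of (\ref{formula}) at a point $p_i$. The curvature determinant collapses: with $R^{V_i}_s=0$ and $\rank N_i=n$,
\[
\frac{1}{\det_{N_i}\bigl((1+R^{V_i}_s)/(-2\pi\sqrt{-1})\bigr)}
=\frac{1}{\det_{N_i}\bigl(1/(-2\pi\sqrt{-1})\bigr)}
=(-2\pi\sqrt{-1})^{n}.
\]
For the first factor I would unwind the definition of $\frac{\psi}{\det ds}$ in the given trivialization. Writing $\psi=h(z)\,dz_1\wedge\cdots\wedge dz_n\otimes e_1\wedge\cdots\wedge e_n$ and $ds\bigl(\frac{\partial}{\partial z_j}\bigr)=\sum_k\frac{\partial s_k}{\partial z_j}e_k$, the isomorphism $\det ds:\det N_i\to\det\operatorname{Im} ds=\det V|_{p_i}$ of (\ref{equ2}) sends $\frac{\partial}{\partial z_1}\wedge\cdots\wedge\frac{\partial}{\partial z_n}$ to $\det\bigl(\frac{\partial s_k}{\partial z_j}\bigr)(p_i)\,e_1\wedge\cdots\wedge e_n$. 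Viewing $\psi$ as the map $\det V^*\to K_M$ of (\ref{equ1}) and dividing by $\det ds$, and using $V_i=0$ together with $K_{Z_i}=\CC$, collapses $\frac{\psi}{\det ds}$ to the scalar $h(p_i)/\det\bigl(\frac{\partial s}{\partial z}(p_i)\bigr)$.

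Finally I would assemble these in (\ref{formula}) and apply the normalization. By Theorem~\ref{th1} the left-hand integral, multiplied by $(-1)^n/(2\pi i)^n$, equals $\Res_Z\frac{\psi}{s}$; the contraction $\lrcorner$ over $Z_i$ reduces to evaluation at $p_i$ and multiplies the scalar $\frac{\psi}{\det ds}$ against the constant $(-2\pi\sqrt{-1})^{n}$, which in turn cancels the prefactor because $(-2\pi\sqrt{-1})^{n}=(-1)^n(2\pi i)^n$. Each point thus contributes exactly $h(p_i)/\det\bigl(\frac{\partial s}{\partial z}(p_i)\bigr)$, yielding $\Res_Z\frac{\psi}{s}=\sum_{p_i}h(p_i)/\det\bigl(\frac{\partial s}{\partial z}(p_i)\bigr)$, and the vanishing is the compact-$M$ statement already contained in Theorems~\ref{th1} and~\ref{th4}. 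The only genuine hazard I anticipate is bookkeeping: tracking the signs and the powers of $2\pi\sqrt{-1}$ through the dual pairing defining $\lrcorner$, so that the numerical factors cancel to $1$ rather than leaving a spurious $(-1)^n$ or $(2\pi i)^{\pm n}$; checking that $\frac{\psi}{\det ds}$ genuinely degenerates to the Jacobian quotient is then routine once the adjunction $K_M|_{p_i}\cong K_{Z_i}\otimes\det N_i^*$ is invoked.
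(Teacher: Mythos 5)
Your proposal is correct and takes essentially the same route as the paper: the corollary is stated there as the immediate zero-dimensional specialization of Theorem \ref{th4}, exactly as you derive it, with $V_i=0$, $R^{V_i}_s=0$, $\det_{N_i}(1/(-2\pi\sqrt{-1}))^{-1}=(-2\pi\sqrt{-1})^n$ cancelling the normalization $(-1)^n/(2\pi i)^n$ via Theorem \ref{th1}, and $\frac{\psi}{\det ds}$ collapsing to $h(p_i)/\det\bigl(\frac{\partial s}{\partial z}(p_i)\bigr)$. Your constant and sign bookkeeping checks out against the paper's formulas.
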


Let $M$ be a compact manifold, $V$ be a holomorphic bundle over $M$ that $\mbox{rk} V=\dim M$, with a section $s\in \Gamma(M,V)$ and the zero loci $Z=\cup_{i=1}^w Z_i$, where all $Z_i$ are smooth and at least one $Z_i$ is zero dimension.  $V$ is splitting on $Z_i$ as in Theorem \ref{th4}. Let $\psi$ be a section of $ K_M\otimes \det V$. Then we have the following general Cayley- Bacharach theorem.
\begin{coro}
With the assumptions as above and $\psi$ is vanishing on all components of $Z$ except one of the zero dimension component $Z_i$, then it is vanishing on whole $Z$.
\end{coro}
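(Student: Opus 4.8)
The plan is to substitute the hypothesis directly into the residue identity of Theorem \ref{th4} and read off the vanishing of $\psi$ at the isolated zero. I would relabel the components so that the distinguished zero-dimensional component is $Z_1=\{p\}$, and assume $\psi$ vanishes identically along every $Z_j$ with $j\geq 2$. The starting point is the global identity of Theorem \ref{th4},
$$0=\int_M(\psi\lrcorner e^{-S})=\sum_i\int_{Z_i}\left(\frac{\psi}{\det ds}\lrcorner\frac{1}{\det_{N_i}((1+R^{V_i}_s)/-2\pi\sqrt{-1})}\right),$$
in which the total integral over the compact manifold $M$ splits as a sum of local contributions, one for each connected component of $Z$. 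The proof is then a matter of showing that all but one of these contributions vanish, and that the surviving one is a nonzero multiple of the value of $\psi$ at $p$.

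First I would check that each term with $j\geq 2$ drops out. The integrand over $Z_j$ is assembled from the induced section $\frac{\psi}{\det ds}\in\Gamma(Z_j,K_{Z_j}\otimes\det V_j)$, which is obtained from $\psi$ by restriction to $Z_j$ followed by the twist by the isomorphism $\det ds$ of (\ref{equ2}). This construction is $\mathcal{O}$-linear in $\psi$, so the pointwise vanishing of $\psi$ along $Z_j$ forces the induced section $\frac{\psi}{\det ds}$ to vanish identically on $Z_j$; hence the entire integrand, and with it the contribution of $Z_j$, is zero. Consequently the sum collapses to the single term coming from $Z_1$.

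Next I would evaluate the surviving contribution of $Z_1=\{p\}$. Because $Z_1$ is zero-dimensional, the Kuranishi sequence degenerates, with $T_{Z_1}=0$, to an isomorphism $ds\colon T_M|_p\to V_p$; thus $V_1=0$ and the Jacobian $\det(\partial s/\partial z)(p)$ is nonzero, so $p$ is a transversal (simple) zero. Moreover $R^{V_1}_s$ lies in $\overline{T^*Z_1}\otimes V_1^*\otimes\End N_1$ and therefore vanishes, so the contribution of $Z_1$ is exactly the classical Grothendieck residue computed in Corollary \ref{classical prop}, namely a fixed nonzero normalization times $h(p)/\det(\partial s/\partial z)(p)$, where locally $\psi=h(z)\,dz_1\wedge\cdots\wedge dz_n\otimes e_1\wedge\cdots\wedge e_n$.

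Combining the two steps yields $0=h(p)/\det(\partial s/\partial z)(p)$, and since the Jacobian is nonzero we conclude $h(p)=0$, i.e.\ $\psi$ vanishes at $p$. Hence $\psi$ vanishes on $Z_1$ as well, and therefore on all of $Z$. The only point requiring genuine care is the first step: one must make sure the integrand over each positive-dimensional $Z_j$ is truly $\mathcal{O}$-linear in $\psi$, so that the vanishing of the section $\psi$ propagates to the vanishing of the induced datum $\frac{\psi}{\det ds}$ used in Theorem \ref{th4}. Once the splitting of $V$ over each $Z_i$ and the automatic transversality at the isolated point are in hand, the remaining steps are formal.
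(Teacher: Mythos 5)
Your proof is correct and takes essentially the same route as the paper: both invoke the vanishing sum of localized contributions from Theorem \ref{th4}, drop the terms from components where $\psi$ vanishes, and identify the surviving term at the isolated point with the classical residue $h(p)/\det\bigl(\frac{\partial s}{\partial z}(p)\bigr)$ of Corollary \ref{classical prop}, forcing $h(p)=0$. Your write-up simply makes explicit two steps the paper leaves implicit, namely the $\mathcal{O}$-linearity of $\frac{\psi}{\det ds}$ in $\psi$ (which kills the positive-dimensional contributions) and the degeneration $V_1=0$, $R^{V_1}_s=0$ at the zero-dimensional component.
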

\begin{proof}
Assuming that one of $Z_i$ is point $p$, then by the corollary \ref{classical prop}, $\Res_p\frac{\psi}{s}=\frac{h(p)}{\det(\frac{\partial s}{\partial z}(p))}$. For $M$ is compact, we have
$$\Res_{Z}\frac{\psi}{s}=\frac{(-1)^n}{(2\pi i)^n}\sum\int_{Z_i}  \bigg(\frac{\psi}{\det \ d s}\lrcorner\frac{1}{\det_{N_i}((1+R^{V_i}_s)/-2\pi \sqrt{-1})}\bigg)=\frac{h(p)}{\det(\frac{\partial s}{\partial z}(p))}=0.$$
So $h(p)=0$.
\end{proof}

\bibliographystyle{amsplain}

\begin{thebibliography}{10}




 \bibitem[1]{WB} C Breasley and E Witten.  \emph{Residues and World-Sheet Instantons}, arxiv 0304115 \black



\bibitem[2]{ML1} H-L Chang and M-L Li. \emph{Virtual residue and an integral formalism}, arXiv:1508.02769. To appear in The Journal of Geometric Analysis.




\bibitem[3]{EGH} D Eisenbud, M Green and J Harris. \emph{Cayley-Bacharach theorems and conjectures}.  Bulletin of The American Mathematical Society, Volume 33, Number 3, July 1996





\bibitem[4]{Feng} H Feng and X Ma. \emph{Transversal holomorphic sections and localization of analytic torsion}, Pacific Journal of Mathematics. 219(2005), 255-270




\bibitem[5]{GH} P Griffiths and J Harris. \emph{Principles in Algebraic Geometry}, Pure and Applied Mathematics (Wiley-Interscience, New York, 1978)

\bibitem[6]{HS}  H Sun. \emph{On the Cayley-Bacharach property and the construction of vector bundles}, Sci China Math, 2011, 54(9): 1891每1898,

\bibitem[7]{SLT1} S-L Tan. \emph{Cayley-Bacharach property of an algebraic variety and Fujita＊s conjecture}, J Algebraic Geometry, 2000, 9:201每222

\bibitem[8]{TV} S-L Tan and E Viehweg. \emph{A note on Cayley-Bacharach property for vector bundles}, In: Thomas Peternell, Frank-Olaf Schreyer, eds. Complex Analysis and Algebraic Geometry. Berlin: Walter de Gruyter, 2000, 361每373

\bibitem[9]{SLT2}S-L Tan. \emph{Cayley-Bacharach property and k-very ampleness}. AMS/IP Studies in Advanced Mathematics, 2001, 20: 241每250

\end{thebibliography}

\end{document}